\numberwithin{equation}{section}
\newtheorem{theorem}{Theorem}[section]
\newtheorem{proposition}[theorem]{Proposition}
\newtheorem{lemma}[theorem]{Lemma}
\newtheorem{corollary}[theorem]{Corollary}
\theoremstyle{definition}
\theoremstyle{remark}
\newtheorem{remark}[theorem]{Remark}
\newcommand{\Z}{\mathbb{Z}}
\newcommand{\R}{\mathbb{R}}
\newcommand{\M}{\mathcal{M}}
\newcommand{\End}{\mathrm{End}}
\title{Monoids of self-maps of topological spherical space forms}
\author{Daisuke Kishimoto}
\address{Department of Mathematics, Kyoto University, Kyoto, 606-8502, Japan}
\email{kishi@math.kyoto-u.ac.jp}
\author{Nobuyuki Oda}
\address{Department of Applied Mathematics, Faculty of Science, Fukuoka University, Fukuoka, 814-0180, Japan}
\email{odanobu@fukuoka-u.ac.jp}
\subjclass[2010]{55Q05}
\keywords{monoid of self-maps, topological spherical space form, equivariant Hopf theorem}
\begin{document}

\baselineskip.525cm

\maketitle

\begin{abstract}
  A topological spherical space form is the quotient of a sphere by a free action of a finite group. In general, their homotopy types depend on specific actions of a group. We show that the monoid of homotopy classes of self-maps of a topological spherical space form is determined by the acting group and the dimension of the sphere, not depending on a specific action.
\end{abstract}


\section{Introduction}

Let $X$ be a pointed space, and let $\M(X)$ denote the pointed homotopy set $[X,X]$. Then $\M(X)$ is a monoid under the composition of maps. The monoid $\M(X)$ is obviously fundamental for understanding the space $X$. Invertible elements of $\M(X)$ form a group, which is the group of self-homotopy equivalences of $X$, denoted by $\mathcal{E}(X)$. The groups of self-homotopy equivalences have been intensely studied so that there are a lot of results on them. There is a comprehensive survey on them \cite{R}. However, despite its importance, not much is known about the monoids of self-maps $\M(X)$, and in particular, there are only two cases that we know an explicit description of $\M(X)$: the case $X$ is a sphere or a complex projective spaces. Notice that $\M(X)$ has not been determined even in the case $X$ is a real projective space or a lens space.

A topological spherical space form is, by definition, the quotient space of a sphere by a free action of a finite group. Then real projective spaces and lens spaces are typicial examples of such. We refer to \cite{Ham} for details about topological spherical space forms. The purpose of this paper is to determine the monoids of self-maps of topological spherical space forms.

We recall basic facts about free actions of finite groups on spheres. Let $G$ be a finite group acting freely on $S^n$. Then it is well known that
\begin{equation}
  \label{periodic}
  H^{2n+2}(BG;\Z)\cong\Z/|G|.
\end{equation}
If $n$ is even, then $G$ must be a cyclic group of order 2, and so $S^n/G$ is homotopy equivalent to $\R P^n$. Suppose $n$ is odd. Then every orientation reversing self-map of $S^n$ has a fixed point by the Lefschetz fixed-point theorem, and so the action of $G$ on $S^n$ is orientation-preserving. Hence $S^n/G$ is an oriented compact connected manifold.

First, we state the main theorem in odd dimension. Let $G$ be a finite group acting freely on $S^{2n+1}$. We introduce a new monoid out of a finite group $G$. Let $\alpha\in\End(G)$. By \eqref{periodic}, the induced map $\alpha_*\colon H^{2n+2}(BG;\Z)\to H^{2n+2}(BG;\Z)$ is identified with an element of $\Z/|G|$, which gives rise to a monoid homomorphism
\begin{equation}
  \label{d}
  d\colon\End(G)\to(\Z/|G|)_\times
\end{equation}
where $(\Z/m)_\times$ denotes the monoid of integers mod $m$ under multiplication. Let $M_\alpha$ be a subset $d(\alpha)+|G|\Z$ of $\Z$. Then we can define a new monoid by
$$M(G,n)=\coprod_{\alpha\in\End(G)}M_\alpha$$
such that the product of $x\in M_\alpha$ and $y\in M_\beta$ is $xy\in M_{\alpha\beta}$. Clearly, the identity element of $M(G,n)$ is $1\in M_1$.

Now we are ready to state the main theorem in odd dimension.

\begin{theorem}
  \label{main odd}
  Let $G$ be a finite group acting freely on $S^{2n+1}$. Then there is an isomorphism
  $$\M(S^{2n+1}/G)\cong M(G,n).$$
\end{theorem}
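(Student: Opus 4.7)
The plan is to construct an explicit map $\Phi\colon \M(S^{2n+1}/G) \to M(G,n)$ using the universal cover $p\colon S^{2n+1} \to S^{2n+1}/G =: L$, check it lands in the correct cosets and is multiplicative, and then invoke an equivariant Hopf theorem for bijectivity. Given a pointed self-map $f\colon L \to L$, set $\alpha := f_* \in \End(G)$ and let $\tilde f\colon S^{2n+1} \to S^{2n+1}$ be the unique pointed lift; this $\tilde f$ is automatically $\alpha$-equivariant in the sense that $\tilde f(g\cdot x) = \alpha(g)\cdot \tilde f(x)$. Define $\Phi([f]) := \deg(\tilde f)$ and send it to $M_\alpha$. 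Pointed homotopies lift to pointed equivariant homotopies, so $\Phi$ is well-defined, and $\widetilde{f\circ g} = \tilde f \circ \tilde g$ for pointed lifts gives multiplicativity of degree, so $\Phi$ is a monoid homomorphism once the image is shown to lie in the claimed components. Independence of $\deg(\tilde f)$ from the choice of (non-pointed) lift uses that every deck transformation acts on $S^{2n+1}$ with degree $+1$, as recorded in the introduction via the Lefschetz theorem.

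To verify $\deg(\tilde f) \equiv d(\alpha)\pmod{|G|}$, consider the classifying map $\iota\colon L \to BG$ of the universal cover. The map $\iota$ is $(2n+1)$-connected, so $\iota_*\colon H_{2n+1}(L;\Z) = \Z \to H_{2n+1}(BG;\Z)$ is surjective; by \eqref{periodic} together with universal coefficients, $H_{2n+1}(BG;\Z) \cong \Z/|G|$, and so $\iota_*$ realizes $H_{2n+1}(L;\Z)$ as a surjection onto a cyclic group of order $|G|$. The naturality relation $\iota\circ f \simeq B\alpha\circ\iota$ yields a commuting square on $H_{2n+1}(-;\Z)$ in which $f$ acts by multiplication by $\deg(\tilde f)$ on the $\Z$ and $B\alpha$ acts by multiplication by $d(\alpha)$ on $\Z/|G|$, the latter via the identification of $d$ given in \eqref{d} and the Ext-description of $H^{2n+2}$ from $H_{2n+1}$. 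Since $\iota_*(1)$ is a generator of $\Z/|G|$, this square forces $\deg(\tilde f)\equiv d(\alpha)\pmod{|G|}$, so $\Phi$ lands in $M_\alpha$.

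Bijectivity of $\Phi$ reduces, for each fixed $\alpha\in\End(G)$, to the equivariant Hopf theorem advertised in the keywords: pointed $\alpha$-equivariant homotopy classes of maps $S^{2n+1}\to S^{2n+1}$ are classified by degree, with image exactly $d(\alpha)+|G|\Z$. Injectivity follows from standard equivariant obstruction theory applied to the free $G$-CW complex $S^{2n+1}$: two pointed $\alpha$-equivariant maps agreeing on lower skeleta are equivariantly homotopic, and the remaining top-cell obstruction in $H^{2n+1}(L;\pi_{2n+1}(S^{2n+1})) = \Z$ coincides with the difference of degrees. Surjectivity is proved constructively: build an $\alpha$-equivariant map cell by cell on a $G$-CW structure of $S^{2n+1}$, where existence on lower skeleta is automatic by freeness and lower dimension, and then adjust the top-cell extension to attain any prescribed degree in the coset $d(\alpha)+|G|\Z$.

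The main obstacle is the equivariant Hopf theorem in the form that simultaneously accommodates arbitrary $\alpha\in\End(G)$ (including non-injective or non-surjective endomorphisms) and pinpoints the realizable degrees as exactly $d(\alpha)+|G|\Z$. Naturality gives the necessary constraint $\deg\equiv d(\alpha)\pmod{|G|}$ for free, but the constructive realization of each such degree by a genuinely $\alpha$-equivariant map, together with the careful bookkeeping of basepoints and of the free $G$-action on the cell level, is the substantive step. Once the equivariant Hopf theorem is in place, assembling the bijections over all $\alpha\in\End(G)$ yields the desired monoid isomorphism $\Phi\colon \M(S^{2n+1}/G) \xrightarrow{\cong} M(G,n)$.
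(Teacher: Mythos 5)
Your proposal is correct in outline and shares the paper's skeleton: decompose $\M(S^{2n+1}/G)$ by $\alpha=\pi_1(f)$, record the degree of the $\alpha$-equivariant lift, derive the congruence $\deg\equiv d(\alpha)\bmod|G|$ from naturality over $BG$, and get injectivity from the equivariant Hopf theorem (your obstruction-theoretic injectivity step is exactly Lemma \ref{Hopf} combined with Lemma \ref{pi_1 deg}; note only that the top difference class in $H^{2n+1}(S^{2n+1}/G;\Z)\cong\Z$ is $1/|G|$ times the difference of degrees, not equal to it, which still gives injectivity). The two genuine points of divergence are these. First, you check the congruence homologically, via the $(2n+1)$-connected classifying map $S^{2n+1}/G\to BG$ and the identification $H_{2n+1}(BG;\Z)\cong\Z/|G|$ from universal coefficients; the paper instead uses the cohomology transgression in the fibration $S^{2n+1}\to S^{2n+1}/G\to BG$ (Lemma \ref{deg}). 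These are dual formulations of the same fact. Second, and more substantially, your existence step is different: you build an $\alpha$-equivariant self-map of $S^{2n+1}$ cell by cell (automatic below the top dimension since the target is $2n$-connected and the source is $G$-free), observe that modifying the map on a free orbit of top cells shifts the degree by arbitrary multiples of $|G|$, and then pin down the resulting coset as exactly $d(\alpha)+|G|\Z$ using the naturality constraint. The paper's Lemma \ref{existence} instead identifies $S^{2n+1}/G$ with the $(2n+1)$-skeleton of the homotopy fiber of a generator $u\colon BG\to K(\Z,2n+2)$ and restricts a self-map of that fiber covering $\alpha$ and $k$. Your route uses the full strength (image as well as injectivity) of the equivariant Hopf theorem and stays at the level of $G$-CW complexes, at the cost of the orientation and basepoint bookkeeping you flag; the paper's Postnikov-tower construction produces the map together with its degree in one stroke without touching the cell structure. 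Both are sound.
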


Here is an important remark. It is well known that lens spaces of the same dimension with the same $\pi_1$ can have different homotopy types. See \cite[Theorem VI]{O}. Then different free actions of the same finite group $G$ on the same sphere $S^{2n+1}$ can produce topological spherical sphere forms of different homotopy types. However, Theorem \ref{main odd} implies that the monoid of self-maps does not distinguish actions.

When $G$ is abelian, the map $d$ in \eqref{d} is explicitly given in terms of the order of $G$ and the integer $n$, where $G$ acts freely on $S^{2n+1}$. Then a more precise description of $\M(S^{2n+1}/G)$ is available, which will be shown in Section 3. For example, one gets:

\begin{corollary}
  \label{RP odd}
  $\M(\R P^{2n+1})\cong\Z_\times$.
\end{corollary}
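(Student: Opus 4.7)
The plan is to apply Theorem \ref{main odd} with $G = \Z/2$, so that $\M(\R P^{2n+1}) \cong M(\Z/2, n)$, and then unwind the definition of $M(\Z/2,n)$ to identify it with $\Z_\times$. The group $\Z/2$ has exactly two endomorphisms, the identity $\mathrm{id}$ and the zero map $0$, so as a set
$$M(\Z/2,n) = M_0 \sqcup M_{\mathrm{id}}.$$
Hence the work reduces to computing the two values $d(0)$ and $d(\mathrm{id})$ in $(\Z/2)_\times$ and checking that the resulting monoid structure matches multiplication on $\Z$.

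First I would compute $d$. The identity of $G$ induces the identity on $H^{2n+2}(BG;\Z) \cong \Z/2$, hence $d(\mathrm{id}) = 1$. The zero endomorphism factors through the trivial group, whose classifying space is a point, so the induced map on $H^{2n+2}(BG;\Z)$ is zero, giving $d(0) = 0$. Consequently $M_{\mathrm{id}} = 1 + 2\Z$ is the set of odd integers and $M_0 = 0 + 2\Z$ is the set of even integers. Together these two subsets partition $\Z$, giving a set bijection $M(\Z/2,n) \cong \Z$.

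Next I would verify that this bijection is a monoid isomorphism. The multiplication in $M(\Z/2,n)$ sends $x \in M_\alpha$ and $y \in M_\beta$ to $xy \in M_{\alpha\beta}$, where the underlying integer product is ordinary multiplication. The compatibility of the parity labels with ordinary multiplication, namely that odd $\cdot$ odd is odd, odd $\cdot$ even and even $\cdot$ even are even, exactly matches the composition table in $\End(\Z/2)$ (where $\mathrm{id} \cdot \mathrm{id} = \mathrm{id}$ and all other products are $0$). The identity element $1 \in M_{\mathrm{id}}$ corresponds to $1 \in \Z$, so we obtain the monoid isomorphism $M(\Z/2,n) \cong \Z_\times$.

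There is no real obstacle here beyond checking these identifications carefully; the statement is essentially a direct specialisation of Theorem \ref{main odd} combined with the fact that $\End(\Z/2)$ is small enough to enumerate by hand. The only point that requires a moment's care is that the answer $\Z_\times$ does not depend on $n$, which reflects the cancellation between the cardinality of $\End(\Z/2)$ and the coset structure of $M_\alpha \subset \Z$ when $|G| = 2$.
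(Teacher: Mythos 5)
Your proposal is correct and follows essentially the same route as the paper: specialize Theorem \ref{main odd} to $G=C_2$ and identify $M(C_2,n)$ with $\Z_\times$ by computing $d$ on the two endomorphisms. The only (harmless) difference is that the paper derives $d(r)=r^{n+1}$ for all cyclic groups from $H^*(BC_m;\Z)\cong\Z[x]/(mx)$ and then sets $m=2$, whereas you compute $d(\mathrm{id})=1$ and $d(0)=0$ directly from functoriality, which is slightly more elementary and makes the independence of $n$ immediate.
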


Next, we state the main theorem in even dimension. As mentioned above, each of topological spherical forms of dimension $2n$ is of the homotopy type of $\R P^{2n}$. Then the even dimensional case is covered by the following.

\begin{theorem}
  \label{main even}
  Let $M=\Z_\times/\sim$ where $x\sim y$ for $x,y\in\Z_\times$ if $x\equiv y\equiv 0\text{ or }2\mod 4$. Then
  $$\M(\R P^{2n})\cong M.$$
\end{theorem}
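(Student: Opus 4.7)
The plan is to partition $\M(\R P^{2n})$ according to the induced map $f_*\colon\pi_1(\R P^{2n})\to\pi_1(\R P^{2n})=\Z/2$, which is either the identity (``Case~A'') or zero (``Case~B''); composition respects this decomposition since $(fg)_*=f_*g_*$. Fix a basepoint $x_0\in\R P^{2n}$ and a preimage $\tilde x_0\in p^{-1}(x_0)$ under the double cover $p\colon S^{2n}\to\R P^{2n}$, so that based lifts through $p$ are unique.

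For Case~A, lifting gives a bijection between Case~A elements of $\M(\R P^{2n})$ and based antipodal-equivariant homotopy classes $[S^{2n},S^{2n}]_{\Z/2}^*$, since equivariant homotopies of lifts descend to homotopies on $\R P^{2n}$ and any such equivariant $\tilde f$ descends via $p$. By the Borsuk--Ulam theorem $\deg\tilde f$ is odd for every antipodal-equivariant $\tilde f$, and the equivariant Hopf theorem for free $\Z/2$-actions classifies these maps up to based equivariant homotopy by their degree, with every odd integer realized. Hence Case~A is in bijection with the odd integers via $f\mapsto\deg\tilde f$.

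For Case~B, the vanishing of $f_*$ provides a unique based lift $\hat f\colon\R P^{2n}\to S^{2n}$ with $p\hat f=f$, and conversely any based $\hat f$ descends to a Case~B map. Obstruction theory, since $\R P^{2n}$ has dimension $2n$ and $S^{2n}$ is $(2n-1)$-connected with trivial local coefficients, identifies $[\R P^{2n},S^{2n}]$ with the primary obstruction group $H^{2n}(\R P^{2n};\Z)=\Z/2$. Labeling these two classes $[0]$ (null $\hat f$) and $[2]$ (essential $\hat f$) and combining with Case~A gives a set-level bijection $\Phi\colon\M(\R P^{2n})\to M$.

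To verify that $\Phi$ is a monoid isomorphism, I would track compositions through the mod-$2$ top cohomology $H^{2n}(\R P^{2n};\Z/2)=\Z/2$: here $f^*$ is the identity in Case~A and zero in Case~B, while $\hat g^*$ on top mod-$2$ cohomology detects essentiality of $\hat g$. From the identities $\widehat{fg}=\tilde f\circ\hat g$ (Case~A $\circ$ Case~B), $\widehat{gf}=\hat g\circ f$ (Case~B $\circ$ Case~A), and $\widehat{gh}=\hat g\circ h$ with $h^*=0$ (Case~B $\circ$ Case~B), one reads off: odd$\,\cdot\,$odd $=$ odd (degrees multiply); odd $\cdot[\epsilon]=[\epsilon]\cdot$\,odd $=[\epsilon]$; and $[\epsilon_1]\cdot[\epsilon_2]=[0]$. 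These reproduce the multiplication in $M$, the salient arithmetic being $d\cdot 2\equiv 2\pmod 4$ for odd $d$ and $2\cdot 2=4\equiv 0\pmod 4$. The main technical input is the based equivariant Hopf theorem for free antipodal $\Z/2$-actions on $S^{2n}$, giving both realization of all odd degrees and unique classification of antipodal-equivariant self-maps; the remaining cohomological bookkeeping is routine.
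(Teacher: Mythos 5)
Your argument is correct, and its skeleton is the same as the paper's: decompose $\M(\R P^{2n})$ by the induced map on $\pi_1$, identify the $\pi_1$-isomorphism part with the odd integers via degrees of equivariant lifts to $S^{2n}$, identify the $\pi_1$-trivial part with the two-element set $[\R P^{2n},S^{2n}]$ by lifting through the covering, and then compute products. Where you differ is in the implementation of each step, and each substitution is legitimate. For the $\pi_1$-trivial part you invoke the Hopf--Whitney theorem to get $[\R P^{2n},S^{2n}]\cong H^{2n}(\R P^{2n};\Z)\cong\Z/2$, while the paper runs Barratt--Puppe sequences of the cofibration $S^{2n-1}\to\R P^{2n-1}\to\R P^{2n}$ together with the fact that $q\circ p$ has degree $2$ one dimension down; the paper's version is longer but yields the explicit representatives $a_0$ and $a_2=p\circ q$ that it later composes by hand. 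For the $\pi_1$-isomorphism part you obtain oddness of the lifted degree from Borsuk--Ulam and realization of every odd degree from the surjectivity half of the equivariant Hopf theorem, whereas the paper uses a transgression argument in the mod $2$ Serre spectral sequence for oddness and an explicit join construction $f_l*1*\cdots*1$ for realization; both rely on the injectivity half of the equivariant Hopf theorem for uniqueness. Your verification of the multiplication table by tracking induced maps on $H^{2n}(-;\Z/2)$ is a clean replacement for the paper's explicit diagram chases, and it is valid because essentiality of a lift $\R P^{2n}\to S^{2n}$ is indeed detected on top mod $2$ cohomology (the pinch map $q$ induces an isomorphism there). The one step worth spelling out is that based homotopy classes of lifts inject into $\M(\R P^{2n})$ under $p_*$ --- the paper derives this from the exact sequence of the covering $C_2\to S^{2n}\to\R P^{2n}$ --- but your uniqueness-of-based-lifts-of-homotopies argument does supply it.
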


\begin{remark}
  In \cite{M} McGibbon claimed that $\M(\R P^{2n})$ has cardinality two, but this is false as pointed out by Fred Cohen. This was fixed in \cite{IKM} by Iriye, Matsushita and the first author, but the monoid structure was not considered.
\end{remark}

Finally, we present two corollaries of Theorem \ref{main odd}. The monoid of self-maps is not abelian in general. See \cite{CV} for instance. But by Theorem \ref{main even}, $\M(\R P^{2n})$ is abelian. Moreover, if $G$ is abelian, implying $G$ is cyclic, then $M(G,n)$ is so, hence $\M(S^{2n+1}/G)$ by Theorem \ref{main odd}. Thus one gets:

\begin{corollary}
  Let $G$ be a finite abelian group acting freely on $S^{2n+1}$. Then $\M(S^{2n+1}/G)$ is abelian.
\end{corollary}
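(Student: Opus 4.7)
The plan is to reduce the corollary to a purely algebraic statement about the monoid $M(G,n)$ via Theorem \ref{main odd}, and then verify commutativity of $M(G,n)$ directly from its definition.

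First I would note that a finite abelian group $G$ acting freely on $S^{2n+1}$ is necessarily cyclic. This is classical: any group acting freely on a sphere has periodic cohomology, and by the relation \eqref{periodic} applied to abelian subgroups, $G$ cannot contain a subgroup isomorphic to $\Z/p\oplus\Z/p$ for any prime $p$ (such a subgroup has non-periodic integral cohomology). Combined with the structure theorem for finite abelian groups, this forces $G$ to be cyclic.

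Next I would use the fact that for a cyclic group $G$ of order $m$, the endomorphism monoid $\End(G)$ is canonically isomorphic to the multiplicative monoid $(\Z/m)_\times$; in particular $\End(G)$ is commutative. By Theorem \ref{main odd}, it suffices to show that $M(G,n)$ is commutative. Given $x\in M_\alpha$ and $y\in M_\beta$, the definition of the product in $M(G,n)$ gives $xy\in M_{\alpha\beta}$ and $yx\in M_{\beta\alpha}$. Since $\alpha\beta=\beta\alpha$ in $\End(G)$ we have $M_{\alpha\beta}=M_{\beta\alpha}$, and since integer multiplication is commutative we have $xy=yx$ as elements of $\Z$. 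Hence the two products coincide, and $M(G,n)$ is abelian.

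There is no real obstacle: once Theorem \ref{main odd} is in hand, the corollary follows from the commutativity of $\End(G)$ for cyclic $G$ and the commutativity of integer multiplication. The only potentially subtle point is the well-known reduction from ``abelian and acting freely on a sphere'' to ``cyclic,'' which I would either prove in a line via \eqref{periodic} as above or simply cite from \cite{Ham}.
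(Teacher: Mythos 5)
Your proposal is correct and follows essentially the same route as the paper, which simply observes that $G$ abelian implies $G$ cyclic, hence $\End(G)\cong(\Z/|G|)_\times$ is commutative, so $M(G,n)$ is abelian and Theorem \ref{main odd} finishes the argument. You merely fill in the (standard) justifications that the paper leaves implicit, namely the reduction to cyclic groups via periodicity and the componentwise check that $xy=yx$ in $M(G,n)$.
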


Let $E(G,n)$ denote the group of invertible elements of $M(G,n)$. Then by Theorem \ref{main odd}, $\mathcal{E}(S^{2n+1}/G)\cong E(G,n)$. Clearly, $E(G,n)$ consists of $\pm 1$ in $M_\alpha$ for $\alpha\in\mathrm{Aut}(G)$, that is,
$$E(G,n)=\coprod_{\alpha\in\mathrm{Aut}(G)}M_\alpha\cap\{\pm 1\}.$$
If $|G|>2$, then $|M_\alpha\cap\{\pm1\}|\le 1$, implying $E(G,n)\cong\{\alpha\in\mathrm{Aut}(G)\mid d(\alpha)=\pm 1\}$. On the other hand, for $|G|\le 2$, $E(G,n)=M_1=\{\pm 1\}=C_2$. Thus we obtain the following, which reproves the result of Smallen \cite{S} and Plotnick \cite{P}, where their results seem to exclude the case $|G|\le 2$.

\begin{corollary}
  Let $G$ be a finite group acting freely on $S^{2n+1}$. Then
  $$\mathcal{E}(S^{2n+1}/G)\cong
  \begin{cases}
    \{\alpha\in\mathrm{Aut}(G)\mid d(\alpha)=\pm 1\}&|G|\ge 3\\
    C_2&|G|\le 2.
  \end{cases}$$
\end{corollary}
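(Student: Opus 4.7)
The plan is to deduce this corollary directly from Theorem \ref{main odd} by computing the group of units of the monoid $M(G,n)$. Since $\mathcal{E}(X)$ consists exactly of the invertible elements of $\M(X)$, and Theorem \ref{main odd} gives $\M(S^{2n+1}/G)\cong M(G,n)$, it suffices to identify $E(G,n)$, the units of $M(G,n)$.

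First I would observe that if $x\in M_\alpha$ has an inverse $y\in M_\beta$, then $xy=1\in M_1$ forces $\alpha\beta=1$ in $\End(G)$; since $G$ is finite, this means $\alpha\in\mathrm{Aut}(G)$. Moreover, as $M_\alpha\subset\Z$ and $M_\beta\subset\Z$, the equation $xy=1$ in $\Z$ forces $x,y\in\{\pm 1\}$. Conversely, $\pm 1$ in any $M_\alpha$ with $\alpha\in\mathrm{Aut}(G)$ is clearly invertible, with inverse $\pm 1$ in $M_{\alpha^{-1}}$. This yields the formula
$$E(G,n)=\coprod_{\alpha\in\mathrm{Aut}(G)}M_\alpha\cap\{\pm1\}$$
already recorded in the text.

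Next I would split into the two cases on $|G|$. If $|G|\ge 3$, then $+1$ and $-1$ are distinct modulo $|G|$, so each coset $M_\alpha=d(\alpha)+|G|\Z$ contains at most one of $\pm 1$; it contains exactly one precisely when $d(\alpha)\equiv\pm 1\pmod{|G|}$, i.e.\ when $d(\alpha)=\pm 1$ as an element of $(\Z/|G|)_\times$. Sending such an $\alpha$ to the unique element of $M_\alpha\cap\{\pm 1\}$ defines a set bijection; one then checks that it is a monoid (hence group) isomorphism
$$E(G,n)\cong\{\alpha\in\mathrm{Aut}(G)\mid d(\alpha)=\pm 1\},$$
using that multiplication in $M(G,n)$ restricts to multiplication of the integers $\pm 1$ and composition of automorphisms in the second factor. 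If $|G|\le 2$, then $\mathrm{Aut}(G)$ is trivial and $+1\equiv -1\pmod{|G|}$, so $M_1$ contains both $\pm 1$ and therefore $E(G,n)=M_1\cap\{\pm 1\}=\{\pm 1\}\cong C_2$.

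There is no serious obstacle here; the argument is essentially a bookkeeping exercise with the definition of $M(G,n)$. The only point requiring mild care is the case distinction at $|G|\le 2$, where the fact that $+1$ and $-1$ cease to be distinct modulo $|G|$ is exactly what forces $C_2$ rather than a subgroup of the trivial $\mathrm{Aut}(G)$, and this is what the prior results of Smallen \cite{S} and Plotnick \cite{P} appear to miss.
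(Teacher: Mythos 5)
Your proposal is correct and follows essentially the same route as the paper: both identify $\mathcal{E}(S^{2n+1}/G)$ with the units $E(G,n)$ of $M(G,n)$ via Theorem \ref{main odd}, observe that a unit must be $\pm 1$ lying in some $M_\alpha$ with $\alpha\in\mathrm{Aut}(G)$, and then split on whether $+1$ and $-1$ coincide modulo $|G|$. Your write-up is in fact slightly more careful than the paper's at the step $xy=1\Rightarrow x,y\in\{\pm1\}$ and in the $|G|\le 2$ case, but there is no substantive difference in approach.
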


\textit{Acknowledgement:} The first author is supported by JSPS KAKENHI No. 17K05248.


\section{Mapping degree}

In this section, we characterize self-maps of odd dimensional topological spherical space forms in terms of mapping degrees and the induced maps on the fundamental groups.

Recall that we can define the mapping degree of a map $f\colon X\to Y$, denoted by $\deg(f)$, in the following cases:
\begin{enumerate}
  \item $X$ is an $n$-dimensional CW-complex with $H_n(X;\Z)\cong\Z$ and $Y$ is an $(n-1)$-connected space with $\pi_n(Y)\cong\Z$.
  \item $X$ and $Y$ are oriented compact connected manifolds of dimension $n$.
\end{enumerate}
In particular, we can define the mapping degrees of self-maps of topological spherical space forms of odd dimension.

Let $G$ be a group, and let $X,Y$ be $G$-spaces. We denote the set of $G$-equivariant homotopy classes of $G$-equivariant maps from $X$ to $Y$ by $[X,Y]_G$. The following can be easily deduced from the equivariant Hopf degree theorem \cite[Theorem 8.4.1]{tD}.

\begin{lemma}
  \label{Hopf}
  Let $G$ be a finite group. Let $X$ be a free $G$-complex of dimension $n$ such that $H_n(X;\Z)\cong\Z$, and let $Y$ be an $(n-1)$-connected $G$-space with $\pi_n(Y)\cong\Z$. Then the map
  $$[X,Y]_G\to\Z,\quad[f]\mapsto\deg(f)$$
  is injective.
\end{lemma}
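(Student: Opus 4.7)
The plan is to deduce injectivity from the equivariant Hopf degree theorem \cite[Theorem 8.4.1]{tD} by means of equivariant obstruction theory. Given two $G$-equivariant maps $f,g\colon X\to Y$ with $\deg(f)=\deg(g)$, I want to produce a $G$-equivariant homotopy $H\colon X\times I\to Y$ between them, i.e., an equivariant extension of $f\sqcup g\colon X\times\partial I\to Y$ over the free $G$-CW complex $X\times I$, which has dimension $n+1$.

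Since $G$ acts freely on $X$, its cellular chain complex is a complex of free $\Z G$-modules concentrated in degrees $\le n$, and I would extend the homotopy one $G$-orbit of cells at a time. Extending $H$ over the orbit of $(k+1)$-cells of $X\times I$ coming from a $k$-cell of $X$ is obstructed by an element of $\pi_k(Y)$; by the $(n-1)$-connectedness of $Y$ these groups vanish for $k\le n-1$, so the partial homotopy extends freely up to dimension $n$. The sole remaining obstruction in top dimension $n+1$ lies in the equivariant cohomology group $H^{n+1}_G(X\times I,X\times\partial I;\pi_n(Y))$, which, by freeness of the $G$-action together with excision, is naturally isomorphic to $H^n(X/G;\Z_\chi)$, where $\Z_\chi$ is the local coefficient system on $X/G$ induced by the $G$-action on $\pi_n(Y)\cong\Z$.

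The decisive step is to identify this obstruction class with the integer $\deg(f)-\deg(g)$. On each $n$-cell $e$ of $X$, the cellular obstruction cocycle evaluated at $e$ is the element of $\pi_n(Y)\cong\Z$ represented by gluing $f|_e$ and $g|_e$ along the already-constructed homotopy on $\partial e$; pairing this cocycle against a fundamental class of $X$ yields exactly $\deg(f)-\deg(g)$. Moreover, this pairing realizes an isomorphism $H^n(X/G;\Z_\chi)\cong\Z$, so the cohomology class vanishes precisely when $\deg(f)=\deg(g)$. This identification is what the equivariant Hopf theorem \cite[Theorem 8.4.1]{tD} furnishes in the free setting, and it completes the proof.

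The principal obstacle is this final identification: matching the top-dimensional equivariant obstruction cocycle, after quotienting by the free $G$-action, with the integer $\deg(f)-\deg(g)$, while correctly handling the possibly nontrivial coefficient system $\Z_\chi$ determined by the $G$-action on $\pi_n(Y)$. Once that is in place, the remainder is a routine application of equivariant obstruction theory.
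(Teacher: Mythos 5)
The paper gives no written proof of this lemma beyond the one-line citation of tom Dieck's equivariant Hopf theorem, so the only meaningful comparison is with what a complete argument requires. Your obstruction-theoretic scaffolding is the right mechanism and is correctly set up: homotopies extend equivariantly over cells of dimension $\le n$ of $X\times I$ because $\pi_k(Y)=0$ for $k\le n-1$, the sole remaining obstruction is a difference class in $H^{n+1}_G(X\times I,X\times\partial I;\pi_n(Y))\cong H^n(X/G;\Z_\chi)$, and its image under the composite $H^n(X/G;\Z_\chi)\to H^n(X;\Z)\to\mathrm{Hom}(H_n(X;\Z),\Z)\cong\Z$ is indeed $\deg(f)-\deg(g)$.

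The gap is the sentence asserting that this pairing ``realizes an isomorphism $H^n(X/G;\Z_\chi)\cong\Z$.'' That is not a formality one can wave through or push back onto the citation: it is the entire content of the lemma. Concretely, the kernel of the evaluation map is $\mathrm{Ext}^1_{\Z G}(C_{n-1}(X)/\partial C_n(X),\Z_\chi)$, which for $X$ a homotopy $n$-sphere identifies by dimension shifting with $H^n(G;\Z_\chi)$; proving that this vanishes is where the real input (periodicity of $H^*(BG;\Z)$, equivalently Poincar\'e duality for the closed manifold $X/G$ with the correct orientation character) enters. Moreover, under the hypotheses as literally stated the claimed isomorphism can fail: take $X=S^2$ with the antipodal $C_2$-action and $Y=S^2$ with the trivial $C_2$-action; then $[X,Y]_{C_2}\cong[\R P^2,S^2]\cong H^2(\R P^2;\Z)\cong\Z/2$ while every equivariant map has degree $0$, so the degree map is not injective. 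The evaluation is injective only when the characters by which $G$ acts on $H_n(X)$ and on $\pi_n(Y)$ agree and the relevant $\mathrm{Ext}$/group-cohomology term vanishes, both of which hold in the paper's applications but neither of which you address. So either you must supply that vanishing argument, or you must cite tom Dieck's Theorem 8.4.1 for the injectivity statement itself (as the paper does), in which case the obstruction-theoretic preamble does no work. As written, the proposal stops exactly at the point where the proof begins.
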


The following lemma is proved by Olum \cite[Theorem IIIc]{O} in the case that $|G|$ is odd, where we impose nothing on $|G|$.

\begin{lemma}
  \label{pi_1 deg}
  Let $G$ be a finite group acting freely on $S^{2n+1}$. Then for $f,g\colon S^{2n+1}/G\to S^{2n+1}/G$, the following are equivalent:
  \begin{enumerate}
    \item $f$ and $g$ are homotopic;
    \item $\pi_1(f)=\pi_1(g)$ and $\deg(f)=\deg(g)$.
  \end{enumerate}
\end{lemma}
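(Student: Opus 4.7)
The plan is to reduce the statement to the equivariant Hopf theorem of Lemma \ref{Hopf} by lifting $f$ and $g$ to the universal cover $p\colon S^{2n+1}\to S^{2n+1}/G$. The forward implication (1)$\Rightarrow$(2) is automatic, since $\pi_1$ and $\deg$ are homotopy invariants, so the substance is the converse.

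Suppose $\pi_1(f)=\pi_1(g)=:\alpha$ and $\deg(f)=\deg(g)$. First I would arrange $f$ and $g$ to be pointed maps and take their unique pointed lifts $\tilde f,\tilde g\colon S^{2n+1}\to S^{2n+1}$. A standard covering-space argument shows that such a pointed lift of a map inducing $\alpha$ on $\pi_1$ satisfies $\tilde f(hx)=\alpha(h)\tilde f(x)$ for all $h\in G$, and similarly for $\tilde g$. Re-equipping the target $S^{2n+1}$ with the (possibly non-free) $G$-action defined by $h\cdot_\alpha x:=\alpha(h)x$ turns $\tilde f$ and $\tilde g$ into genuine $G$-equivariant maps $S^{2n+1}\to S^{2n+1}_\alpha$.

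Next I would check that these lifts have equal degrees. The commutativity $p\circ\tilde f=f\circ p$ together with $\deg(p)=|G|$ gives $\deg(\tilde f)=\deg(f)$, and analogously $\deg(\tilde g)=\deg(g)$, so $\deg(\tilde f)=\deg(\tilde g)$. Now the hypotheses of Lemma \ref{Hopf} are met: the source $S^{2n+1}$ is a free $G$-complex of dimension $2n+1$ with $H_{2n+1}(S^{2n+1};\Z)\cong\Z$, and the target $S^{2n+1}_\alpha$ is a $2n$-connected $G$-space with $\pi_{2n+1}\cong\Z$. The injectivity conclusion of Lemma \ref{Hopf} then furnishes a $G$-equivariant homotopy $\tilde f\simeq\tilde g$, which descends via $p$ to a homotopy $f\simeq g$.

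The principal subtlety is the basepoint bookkeeping in the lifting step: distinct lifts of $f$ are equivariant with respect to $G$-conjugate (but generally different) homomorphisms, and one must exploit the fixed-basepoint setting to ensure that $\tilde f$ and $\tilde g$ are simultaneously $\alpha$-equivariant for the same $\alpha$. Once this is in place, the potential non-freeness of the twisted $G$-action on the target does not obstruct the argument, since Lemma \ref{Hopf} requires freeness only of the source.
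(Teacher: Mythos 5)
Your proposal is correct and follows essentially the same route as the paper: lift $f$ and $g$ to the universal cover, observe that the lifts are $G$-equivariant for the target sphere equipped with the $\alpha$-twisted action, compare degrees via the degree-$|G|$ covering projection, and invoke Lemma \ref{Hopf} to get an equivariant homotopy that descends. Your extra remarks on basepoint bookkeeping and on the target action not needing to be free are correct and are implicit in the paper's argument.
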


\begin{proof}
  Suppose that (2) holds. Let $\tilde{f},\tilde{g}\colon S^{2n+1}\to S^{2n+1}$ be lifts of $f,g$, respectively. Let $X$ be a sphere $S^{2n+1}$ equipped with a $G$-action which is the composite of $\pi_1(f)=\pi_1(g)$ and a given $G$-action on $S^{2n+1}$. Then $\tilde{f},\tilde{g}$ are $G$-equivariant maps $S^{2n+1}\to X$. Since the projection $S^{2n+1}\to S^{2n+1}/G$ is injective in $H_{2n+1}$, $\deg(\tilde{f})=\deg(f)=\deg(g)=\deg(\tilde{g})$. Then by applying Lemma \ref{Hopf} to $[S^{2n+1},X]_G$, we obtain that $\tilde{f}$ and $\tilde{g}$ are $G$-equivariantly homotopic. Thus $f$ and $g$ are homotopic, and so (2) implies (1). Clearly, (1) implies (2). Therefore the proof is complete.
\end{proof}


\section{Proof of Theorem \ref{main odd}}

\begin{lemma}
  \label{deg}
  Let $S^n\to E\to B$ be a fibration such that $H^{n+1}(B;\Z)\cong\Z/m$ and the transgression $\tau\colon H^n(S^n;\Z)\to H^{n+1}(B;\Z)$ is surjective. If there is a homotopy commutative diagram
  $$\xymatrix{S^n\ar[r]\ar[d]^f&E\ar[r]\ar[d]&B\ar[d]^g\\
  S^n\ar[r]&E\ar[r]&B}$$
  such that $g^*=k\colon H^{n+1}(B;\Z)\to H^{n+1}(B;\Z)$, then
  $$\deg(f)\equiv k\mod m.$$
\end{lemma}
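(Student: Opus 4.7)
My approach uses naturality of the transgression with respect to maps of fibrations. I would first observe that the homotopy commutative ladder in the hypothesis induces a map of Serre spectral sequences of the fibration $S^n\to E\to B$ (after replacing the middle vertical arrow, if necessary, by a strictly commuting model via the homotopy lifting property). Because the fiber is an $n$-sphere, the $E_2$-page is concentrated in the two rows $q=0$ and $q=n$, so every differential leaving $E_*^{0,n}$ or entering $E_*^{n+1,0}$ is zero except $d_{n+1}\colon E_{n+1}^{0,n}\to E_{n+1}^{n+1,0}$. These observations give the identifications $E_{n+1}^{0,n}=H^n(S^n;\Z)$ and $E_{n+1}^{n+1,0}=H^{n+1}(B;\Z)$, under which $d_{n+1}$ becomes the transgression $\tau$.

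The induced map of spectral sequences then coincides with $f^*$ on the fiber column $E_{n+1}^{0,n}$ and with $g^*$ on the base column $E_{n+1}^{n+1,0}$, and commutes with $d_{n+1}$. Thus naturality reads $g^*\circ\tau=\tau\circ f^*$. Applying this to a generator $\sigma$ of $H^n(S^n;\Z)\cong\Z$, I have $f^*(\sigma)=\deg(f)\cdot\sigma$ by definition of the mapping degree, and $g^*(\tau(\sigma))=k\cdot\tau(\sigma)$ by hypothesis. Combining these yields
$$\deg(f)\cdot\tau(\sigma)=\tau(f^*(\sigma))=g^*(\tau(\sigma))=k\cdot\tau(\sigma)\quad\text{in}\quad H^{n+1}(B;\Z)=\Z/m.$$
Since $\tau$ is surjective onto a cyclic group of order $m$, the element $\tau(\sigma)$ is a generator of $\Z/m$, and so $\deg(f)\equiv k\pmod m$.

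The only point requiring care is the passage from a homotopy commutative ladder to a genuine map of spectral sequences, since the naturality of $d_{n+1}$ needs actual commutativity. I expect this to be the main obstacle, but it is resolved by a standard homotopy lifting argument that replaces the three vertical maps by a strictly commuting triple of maps of the same fibration; the induced cohomology maps $f^*$ and $g^*$ are unchanged, so the conclusion is unaffected. Once this preliminary step is in hand, everything reduces to the naturality of the transgression and the defining property of the degree.
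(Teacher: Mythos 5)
Your proposal is correct and follows essentially the same route as the paper: the paper's proof is exactly the one-line naturality computation $\deg(f)\tau(u)=\tau(f^*(u))=g^*(\tau(u))=k\tau(u)$ applied to a generator $u$ of $H^n(S^n;\Z)$, with the conclusion drawn from $\tau(u)$ having order $m$. You merely spell out the spectral-sequence justification for the naturality of $\tau$ and the strictification of the homotopy commutative ladder, which the paper leaves implicit.
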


\begin{proof}
  Let $u$ denote a generator of $H^n(S^n;\Z)\cong\Z$. Then by naturality
  $$\deg(f)\tau(u)=\tau(\deg(f)u)=\tau(f^*(u))=g^*(\tau(u))=k\tau(u)$$
  and since $\tau(u)$ is of order $m$, $\deg(f)\equiv k\mod m$, as claimed.
\end{proof}

\begin{lemma}
  \label{existence}
  Let $G$ be a finite group acting freely on $S^{2n+1}$, and let $\alpha$ be any endomorphism of $G$. Then for any integer $k$ with $k\equiv d(\alpha)\mod|G|$, there is $f\colon S^{2n+1}/G\to S^{2n+1}/G$ such that
  $$\deg(f)=k\quad\text{and}\quad\pi_1(f)=\alpha.$$
\end{lemma}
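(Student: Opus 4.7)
The plan is to first construct a map $f_0\colon S^{2n+1}/G\to S^{2n+1}/G$ that induces $\alpha$ on $\pi_1$ via an obstruction-theoretic lifting argument, and then to modify it by a pinch-map construction to realize any prescribed degree $k\equiv d(\alpha)\bmod|G|$.

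Let $Y=S^{2n+1}/G$ and let $c\colon Y\to BG$ classify the universal cover, so that $S^{2n+1}\to Y\xrightarrow{c}BG$ is a fibration. I would take $f_0$ to be a lift of $B\alpha\circ c\colon Y\to BG$ along $c$. The obstructions to producing such a lift lie in $H^{j+1}(Y;\pi_j(S^{2n+1}))$ for $j\ge 2n+1$; the coefficient system on $\pi_{2n+1}(S^{2n+1})=\Z$ is trivial because the $G$-action on $S^{2n+1}$ is orientation preserving, and since $\dim Y=2n+1$ all of these cohomology groups vanish. Hence a lift $f_0$ exists, and $\pi_1(f_0)=\alpha$ because $B\alpha$ induces $\alpha$ on $\pi_1$. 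Applying Lemma~\ref{deg} to the homotopy commutative square that $f_0$ forms with $B\alpha$ -- whose relevant transgression $H^{2n+1}(S^{2n+1};\Z)\to H^{2n+2}(BG;\Z)=\Z/|G|$ is the quotient map and hence surjective -- then yields $\deg(f_0)\equiv d(\alpha)\bmod|G|$.

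For the degree adjustment I would exploit that $Y$ is a closed orientable $(2n+1)$-manifold, hence admits a standard pinch map $\mu\colon Y\to Y\vee S^{2n+1}$ obtained by collapsing the boundary of a small closed ball in $Y$; the two component projections of $\mu$ into $Y$ and $S^{2n+1}$ are respectively homotopic to the identity and to a degree-one collapse map. Let $\iota\colon S^{2n+1}\to Y$ be the universal covering; it generates $\pi_{2n+1}(Y)\cong\Z$ and has degree $|G|$. Then for any integer $m$ the composite
$$f_m\colon Y\xrightarrow{\mu}Y\vee S^{2n+1}\xrightarrow{(f_0,\,m\iota)}Y$$
still induces $\alpha$ on $\pi_1$ (since $S^{2n+1}$ is simply connected) and has degree $\deg(f_0)+m|G|$, which can be made equal to $k$ by the right choice of $m$. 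The main obstacle is really the first step, namely verifying the vanishing of the obstruction groups in the lifting problem (which requires both the dimension bound on $Y$ and the orientation-preserving nature of the $G$-action); once $f_0$ is in hand, the pinch-map degree correction is routine.
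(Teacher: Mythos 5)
Your proof is correct, but it takes a genuinely different route from the paper's. The paper argues ``from above'': it forms the second Postnikov stage $F=\mathrm{fib}\bigl(u\colon BG\to K(\Z,2n+2)\bigr)$, identifies $S^{2n+1}/G$ with the $(2n+1)$-skeleton of $F$ via a Serre spectral sequence computation, and then builds a self-map $\widetilde{\alpha}$ of $F$ covering $B\alpha$ and lying over multiplication by $k$ on $K(\Z,2n+2)$; since that forces $\widetilde{\alpha}^*=k$ on $H^{2n+1}(F;\Z)\cong\Z$, the restriction to the skeleton realizes the \emph{exact} integer $k$ in a single construction. You instead argue ``from below'': obstruction theory on the fibration $S^{2n+1}\to Y\to BG$ produces a lift $f_0$ of $B\alpha\circ c$ for dimension reasons alone ($H^{j+1}(Y;-)=0$ for $j\ge 2n+1$, so the orientation remark about the coefficient system is not even needed), Lemma~\ref{deg} pins down the degree mod $|G|$, and the pinch comultiplication on the closed oriented manifold $Y$ together with the degree-$|G|$ covering projection corrects the degree by arbitrary multiples of $|G|$. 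Your approach avoids the Postnikov/skeleton identification entirely and is more geometric, at the cost of a two-step construction; the paper's approach nails the integral degree at once and reuses the $Y\simeq F^{2n+1}$ identification as organizing machinery. One small point to make explicit: Lemma~\ref{deg} as stated controls the degree of the induced map $\widetilde{f_0}$ on the fibre $S^{2n+1}$, so you should add the (standard) observation that $\deg(f_0)=\deg(\widetilde{f_0})$ because $p$ is injective on $H_{2n+1}$, exactly as in the proof of Lemma~\ref{pi_1 deg}.
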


\begin{proof}
  Let $u\colon BG\to K(\Z,2n+2)$ denote a generator of $H^{2n+2}(BG;\Z)\cong\Z/|G|$, and let $F$ denote the homotopy fiber of $u$. Then $F$ is the second stage Postnikov tower of $S^{2n+1}/G$, and so there is a natural map $g\colon S^{2n+1}/G\to F$. Let $F^{2n+1}$ and $X$ denote the $(2n+1)$-skeleton of $F$ and the homotopy fiber of the canonical map $F^{2n+1}\to BG$, respectively. By considering the Serre spectral sequence associated to a homotopy fibration $X\to F^{2n+1}\to BG$, one gets
  $$H^*(X;\Z)\cong\begin{cases}\Z&*=0,2n+1\\0&*\ne 0,2n+1.\end{cases}$$
  Moreover, $X$ is simply connected since the map $F^{2n+1}\to BG$ is an isomorphism in $\pi_1$. Then $X\simeq S^{2n+1}$. Now there is a homotopy commutative diagram
  $$\xymatrix{S^{2n+1}\ar[r]\ar@{=}[d]&S^{2n+1}/G\ar[d]\ar[r]&BG\ar@{=}[d]\\
  S^{2n+1}\ar[r]&F^{2n+1}\ar[r]&BG}$$
  where rows are homotopy fibrations. Then $S^{2n+1}/G\simeq F^{2n+1}$, and so the map $g\colon S^{2n+1}/G\to F$ is identified with the inclusion of the $(2n+1)$-skeleton.

  Let $\alpha$ and $k$ be as in the statement. Then there is a homotopy commutative diagram
  $$\xymatrix{F\ar[r]\ar[d]^{\widetilde{\alpha}}&BG\ar[d]^\alpha\ar[r]^(.35)u&K(\Z,2n+2)\ar[d]^k\\
  F\ar[r]&BG\ar[r]^(.35)u&K(\Z,2n+2).}$$
  Then $\widetilde{\alpha}^*=k$ on $H^{2n+1}(F;\Z)\cong\Z$ and $\pi_1(\widetilde{\alpha})=\alpha$. Thus the restriction of $\widetilde{\alpha}$ to $S^{2n+1}/G$, which is identified with the $(2n+1)$-skeleton of $F$, is the desired map.
\end{proof}

\begin{proof}
  [Proof of Theorem \ref{main odd}]
  For $\alpha\in\End(G)$, let
  $$N_\alpha=\{f\in\M(S^{2n+1}/G)\mid\pi_1(f)=\alpha\text{ and }\deg(f)\equiv d(\alpha)\mod|G|\}.$$
  By Lemmas \ref{deg} and \ref{existence}, $\M(S^{2n+1}/G)=\coprod_{\alpha\in\End(G)}N_\alpha$ as a set. For $f\in N_\alpha$ and $g\in N_\beta$, one has
  \begin{equation}
    \label{product}
    \deg(fg)=\deg(f)\deg(g),\quad\pi_1(fg)=\pi_1(f)\pi_1(g).
  \end{equation}
  By Lemmas \ref{pi_1 deg}, \ref{deg} and \ref{existence}, the map
  $$N_\alpha\to M_\alpha,\quad f\mapsto\deg(f)$$
  is well-defined and bijective. Then one gets a bijection $\M(S^{2n+1}/G)\to M(G,n)$. Moreover, this map is a monoid homomorphism by \eqref{product}. Thus the proof is complete.
\end{proof}

We describe $M_\alpha$ in $M(G,n)$ when $G$ is cyclic. Let $C_m$ denote a cyclic group of order $m$, and consider a free action of $C_m$ on $S^{2n+1}$. Since
$$H^*(BC_m;\Z)=\Z[x]/(mx),\quad|x|=2,$$
the map $d\colon\End(C_m)\to(\Z/m)_\times$ is given by $d(\alpha)=\alpha^{n+1}$, where $\alpha\in\End(C_m)$ is assumed to be an element of $(\Z/m)_\times$ through a natural isomorphism $\End(C_m)\cong(\Z/m)_\times$. Then
$$M_r=r^{n+1}+m\Z$$
for $r\in(\Z/m)_\times\cong\End(G)$. This gives us, for example, an explicit description of the monoid of self-maps of a lens space. From the description of $M_r$ above, one can see that there is an isomorphism $M(C_2,n)\cong\Z_\times$, hence Corollary \ref{RP odd}.


\section{Proof of Theorem \ref{main even}}

First, we set notation that we are going to use in this section. Let $p\colon S^n\to\R P^n$ denote the universal covering, and let $q\colon\R P^n\to S^n$ be the pinch map onto the top cell. Let $j\colon\R P^{n-1}\to\R P^n$ denote the inclusion. We collect well known facts about real projective spaces that we are going to use. See \cite{Hat} for the proof.

\begin{lemma}
  \label{RP}
  Let $n$ be an integer $\ge 2$.
  \begin{enumerate}
    \item The mod 2 cohomology of $\R P^n$ is given by
    $$H^*(\R P^n;\Z/2)=\Z/2[w]/(w^{n+1}),\quad |w|=1.$$
    \item $q_*\colon H_n(\R P^n;\Z/2)\to H_n(S^n;\Z/2)$ is an isomorphism.
    \item The composite
    $$S^n\xrightarrow{p}\R P^n\xrightarrow{q}S^n$$
    is of degree $1+(-1)^{n+1}$.
    \item There is a cofibration
    $$S^{n-1}\xrightarrow{p}\R P^{n-1}\xrightarrow{j}\R P^n.$$
  \end{enumerate}
\end{lemma}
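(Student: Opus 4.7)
The plan is to build everything from the standard minimal CW structure on $\R P^n$: one cell $e^k$ in each dimension $0 \le k \le n$, with attaching map the universal double cover $p \colon S^{k-1} \to \R P^{k-1}$. Assertion (4) is then immediate, being exactly the attachment of the top cell of $\R P^n$ to its $(n-1)$-skeleton.

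For (1), since $p$ is a 2-to-1 covering, every cellular coboundary in the mod-$2$ cochain complex vanishes (a generic preimage has even cardinality, contributing $0 \pmod 2$), and so $H^k(\R P^n;\Z/2) \cong \Z/2$ for $0 \le k \le n$. To establish the ring structure I would invoke the inclusion $\R P^n \hookrightarrow \R P^\infty = K(\Z/2,1)$: since the two spaces share identical cell structure in dimensions $\le n$, this inclusion induces an isomorphism on $H^*(-;\Z/2)$ in those degrees, and combining with the standard identification $H^*(K(\Z/2,1);\Z/2) \cong \Z/2[w]$ on a degree-one class yields $H^*(\R P^n;\Z/2) \cong \Z/2[w]/(w^{n+1})$.

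Assertion (2) follows because $q \colon \R P^n \to \R P^n/\R P^{n-1} \cong S^n$ is a homeomorphism on the interior of the top cell, so $q_*$ is an isomorphism on cellular $n$-chains and hence on $H_n(-;\Z/2)$. For (3), I realize $p$ as the antipodal quotient $S^n \to S^n/(x \sim -x)$. For a regular value $y$ in the image of the top cell of $\R P^n$ under $q$, the preimage $(qp)^{-1}(y)$ consists of exactly two antipodal points $\{x,-x\}$ in $S^n$; since $p$ is a local homeomorphism and $q$ is a homeomorphism near $y$, the local degrees of $qp$ at $x$ and at $-x$ are related by the degree of the antipodal map of $S^n$, namely $(-1)^{n+1}$. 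Summing gives $\deg(qp) = 1 + (-1)^{n+1}$.

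The one step that really needs input beyond CW-level bookkeeping is the ring-level statement in (1): the cup-product structure is invisible at the cochain level and must be imported either from the $K(\Z/2,1)$ model above or, equivalently, from Poincar\'e duality for the $\Z/2$-orientable manifold $\R P^n$. Everything else reduces to inspecting the cellular structure and a one-line local-degree calculation.
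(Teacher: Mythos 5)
Your arguments are correct and are the standard ones: the paper gives no proof of this lemma at all, simply citing Hatcher's textbook, where precisely these cellular-chain-complex, quotient-to-top-cell, and local-degree-at-antipodal-preimages arguments (and the Poincar\'e duality / $K(\Z/2,1)$ computation of the ring structure) appear. You also correctly isolate the one genuinely nontrivial input, namely the cup-product structure in (1), which cannot be read off the mod 2 cellular cochain complex alone.
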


First, we determine $\M(\R P^{2n})$ as a set in a way different from \cite{IKM}. Since $\pi_1(\R P^{2n})\cong\Z/2$ and $\End(\Z/2)\cong(\Z/2)_\times$, we have the following decomposition

\begin{lemma}
  \label{decomp}
  For $r=0,1$, Let $M_r=\{f\in\M(\R P^{2n})\mid\pi_1(f)=r\}$. Then
  $$\M(\R P^{2n})=M_0\sqcup M_1.$$
\end{lemma}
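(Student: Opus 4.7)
The plan is to observe that this decomposition is an immediate consequence of the functoriality of $\pi_1$ together with the fact that $\End(\Z/2)$ has exactly two elements. First I would note that $\pi_1(\R P^{2n}) \cong \Z/2$, and that the only group endomorphisms of $\Z/2$ are the zero homomorphism and the identity. Via the identification $\End(\Z/2) \cong (\Z/2)_\times$ used throughout the paper, these correspond exactly to $r = 0$ and $r = 1$.

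Given any self-map $f \in \M(\R P^{2n})$, the induced map $\pi_1(f)$ is a group endomorphism of $\Z/2$, so $\pi_1(f) \in \{0,1\}$. Consequently $f$ lies in $M_0$ or in $M_1$, and the two subsets are disjoint by construction since $\pi_1(f)$ is single-valued. This yields the claimed set-theoretic partition $\M(\R P^{2n}) = M_0 \sqcup M_1$.

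There is no genuine obstacle in this lemma itself; the statement is essentially a repackaging of definitions to set up notation for the arguments to follow. The substantive work, which this lemma merely organises, lies in the subsequent analysis of $M_0$ and $M_1$ individually, where one presumably uses the pinch map $q$, the universal covering $p$, and the cofibration $S^{2n-1}\xrightarrow{p}\R P^{2n-1}\xrightarrow{j}\R P^{2n}$ from Lemma \ref{RP} to pin down which mapping degrees are realised in each $M_r$, and then identifies the resulting monoid with $\Z_\times/\sim$ as in Theorem \ref{main even}.
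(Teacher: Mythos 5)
Your argument is correct and coincides with the paper's own justification, which is simply the sentence preceding the lemma: since $\pi_1(\R P^{2n})\cong\Z/2$ and $\End(\Z/2)\cong(\Z/2)_\times$ has exactly the two elements $0$ and $1$, every self-map induces one of them on $\pi_1$ and the two subsets are disjoint by definition. Nothing further is needed.
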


Let $f\in M_0$. Then $f$ lifts to $S^{2n}$, implying that $f$ factors as the composite
\begin{equation}
  \label{def a}
  \R P^{2n}\xrightarrow{q}S^{2n}\xrightarrow{k}S^{2n}\xrightarrow{p}\R P^{2n}
\end{equation}
for some integer $k\in\Z$. Thus $M_0=p_*\circ q^*(\pi_{2n}(S^{2n}))$.
There is an exact sequence of pointed sets
$$[\R P^{2n},C_2]\to[\R P^{2n},S^{2n}]\xrightarrow{p_*}\M(\R P^{2n})$$
induced from the covering $C_2\to S^{2n}\xrightarrow{p}\R P^{2n}$. Since $[\R P^{2n},C_2]=*$, one sees that $p_*\colon[\R P^{2n},S^{2n}]\to\M(\R P^{2n})$ is injective by considering the action of $[\R P^{2n},C_2]$ on $[\R P^{2n},S^{2n}]$. On the other hand, there is a diagram
$$\xymatrix{\pi_{2n}(S^{2n})\ar[d]^{\Sigma q^*}\\
[\Sigma\R P^{2n-1},S^{2n}]\ar[r]^(.6){\Sigma p^*}\ar[d]^{\Sigma j^*}&\pi_{2n}(S^{2n})\ar[r]^{q^*}&[\R P^{2n},S^{2n}]\\
[\Sigma\R P^{2n-2},S^{2n}]}$$
in which the column and the row are exact sequences of pointed sets induced from the cofibration in Lemma \ref{RP} (4). Since $[\Sigma\R P^{2n-2},S^{2n}]=*$, the map $\Sigma q^*\colon\pi_{2n}(S^{2n})\to[\Sigma\R P^{2n-1},S^{2n}]$ is surjective. For $k\in\Z$, let $a_{2k}\in\M(\R P^{2n})$ be the composite \eqref{def a}. Then one gets the following by Lemma \ref{RP} (3).

\begin{proposition}
  \label{H_0}
  $M_0=\{a_0,a_2\}$ such that $a_{4k}=a_0$ and $a_{4k+2}=a_2$ for each $k\in\Z$.
\end{proposition}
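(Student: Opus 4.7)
The plan is to combine three facts already established just above: $M_0 = p_*(q^*(\pi_{2n}(S^{2n})))$, the injectivity of $p_*\colon [\R P^{2n}, S^{2n}] \to \M(\R P^{2n})$, and the surjectivity of $\Sigma q^*\colon \pi_{2n}(S^{2n}) \to [\Sigma \R P^{2n-1}, S^{2n}]$. Since $a_{2k} = p_*(q^*(k))$, the injectivity of $p_*$ reduces the statement to determining when $q^*(k) = q^*(k')$ in $[\R P^{2n}, S^{2n}]$.

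First I would compute the image of $\Sigma p^*\colon [\Sigma \R P^{2n-1}, S^{2n}] \to \pi_{2n}(S^{2n}) = \Z$. Given $\beta$ in the source, the surjectivity of $\Sigma q^*$ produces $\alpha \in \pi_{2n}(S^{2n})$ with $\beta = \alpha \circ \Sigma q$, so $\Sigma p^*(\beta) = \alpha \circ \Sigma(q \circ p)$. Applying Lemma \ref{RP}(3) in dimension $2n-1$, $q \circ p\colon S^{2n-1} \to S^{2n-1}$ has degree $1 + (-1)^{2n} = 2$, whence $\Sigma p^*(\beta) = 2\alpha$ and the image equals $2\Z$. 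By exactness of the Puppe sequence at $\pi_{2n}(S^{2n})$, the preimage of the basepoint under $q^*$ is exactly $2\Z$.

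The main obstacle is upgrading this single-fiber calculation to the full fiber relation $q^*(k) = q^*(k')$ iff $k \equiv k' \pmod 2$, since $[\R P^{2n}, S^{2n}]$ is a priori only a pointed set. For this I would use the standard action of $\pi_{2n}(S^{2n})$ on $[\R P^{2n}, S^{2n}]$ induced by the Barratt--Puppe coaction $\R P^{2n} \to \R P^{2n} \vee S^{2n}$: its orbits coincide with the fibers of $j^*\colon [\R P^{2n}, S^{2n}] \to [\R P^{2n-1}, S^{2n}] = *$, so the action is transitive, and the isotropy of the basepoint equals $\mathrm{image}(\Sigma p^*) = 2\Z$. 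Since the orbit of the basepoint under $k$ is exactly $q^*(k)$, the map $q^*$ descends to a bijection $\Z/2\Z \cong [\R P^{2n}, S^{2n}]$. Equivalently and perhaps more cleanly, one can invoke the Hopf--Whitney theorem to identify $[\R P^{2n}, S^{2n}]$ with $H^{2n}(\R P^{2n};\Z) \cong \Z/2$, under which $q^*$ becomes reduction mod $2$.

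Combining, $a_{2k} = a_{2k'}$ iff $k \equiv k' \pmod 2$. Taking $k' = 0$ gives $a_{4k} = a_0$ and taking $k' = 1$ gives $a_{4k+2} = a_2$, which is the claim; in particular $M_0 = \{a_0, a_2\}$.
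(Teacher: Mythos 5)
Your proposal is correct and follows essentially the same route as the paper: the paper's (very terse) argument is precisely the exact-sequence setup preceding the proposition — injectivity of $p_*$, surjectivity of $\Sigma q^*$, and the Puppe sequence for $S^{2n-1}\xrightarrow{p}\R P^{2n-1}\xrightarrow{j}\R P^{2n}$ — combined with Lemma \ref{RP}(3) to see that the image of $\Sigma p^*$ is $2\Z$. You merely make explicit the step the paper leaves implicit, namely that the coaction identifies all fibers of $q^*$ (not just the kernel) with cosets of $2\Z$, which is a worthwhile clarification but not a different proof.
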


\begin{lemma}
  \label{b_l}
  For each $l\in\Z$, there is a unique $b_{2l+1}\in M_1$ which lifts to a map $S^{2n}\to S^{2n}$ of degree $2l+1$.
\end{lemma}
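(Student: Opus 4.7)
The plan is to identify elements of $M_1$ with equivalence classes of $\Z/2$-equivariant self-maps of $S^{2n}$ carrying the antipodal free action, and then apply Lemma \ref{Hopf}.

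Since $\pi_1(f)=1$ is the nontrivial identity of $\Z/2=\pi_1(\R P^{2n})$, a standard covering-space argument shows that each $f\in M_1$ admits a lift $\tilde f\colon S^{2n}\to S^{2n}$ which is $\Z/2$-equivariant with respect to the antipodal action on source and target, and conversely every equivariant self-map of $S^{2n}$ descends to an element of $M_1$. The two equivariant lifts of a given $f$ differ by post-composition with the deck transformation (the antipodal map $T$ on $S^{2n}$, of degree $-1$), so their degrees differ by a sign.

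The uniqueness claim then follows at once from Lemma \ref{Hopf} applied to the free $\Z/2$-complex $S^{2n}$: any two $\Z/2$-equivariant self-maps of $S^{2n}$ of the same degree are $\Z/2$-equivariantly homotopic, and therefore descend to the same homotopy class in $\M(\R P^{2n})$.

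For existence, I would exhibit an explicit equivariant map of degree $2l+1$ using the join decomposition $S^{2n}=S^1*S^{2n-2}$, under which the antipodal action on $S^{2n}$ corresponds to the antipodal action on each factor of the join. Since $(-z)^{2l+1}=-z^{2l+1}$ for odd exponent, the self-map of $S^1$ given by $z\mapsto z^{2l+1}$ is antipodally equivariant of degree $2l+1$, and joining it with the identity of $S^{2n-2}$ yields an equivariant self-map of $S^{2n}$ whose degree is $(2l+1)\cdot 1=2l+1$ by multiplicativity of the degree under join. Descending this map to $\R P^{2n}$ produces $b_{2l+1}\in M_1$ with a lift of degree $2l+1$, as required. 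The main potential obstacle is the join-degree computation and the identification of the antipodal $\Z/2$-action on $S^{2n}$ with the factorwise action on $S^1*S^{2n-2}$, both of which are standard.
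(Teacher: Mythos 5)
Your argument is essentially the paper's own proof: existence via the join decomposition of $S^{2n}$ with the antipodally equivariant map $z\mapsto z^{2l+1}$ on the $S^1$ factor (the paper writes your $S^{2n-2}$ factor as an iterated join of $S^0$'s with the diagonal action, which is the same thing), and uniqueness by applying Lemma \ref{Hopf} to equivariant lifts of the same degree. The proposal is correct and matches the paper's approach.
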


\begin{proof}
  First, we reproduce the construction of the map $b_{2l+1}$ in \cite{IKM}. Let $S^1=\{z\in\mathbb{C}\mid|z|=1\}$. Consider the antipodal action of $C_2$ on $S^1$ and the canonical free action of $C_2$ on $S^0$. Then the diagonal $C_2$-action on $S^1*\underbrace{S^0*\cdots*S^0}_{2n-1}$ is identified with the antipodal action on $S^{2n}$, where $S^1*\underbrace{S^0*\cdots*S^0}_{2n-1}=S^{2n}$. Define $f_l\colon S^1\to S^1$ by $f_l(z)=z^{2l+1}$ for $z\in S^1$. Since $f_l$ is a $C_2$-map of degree $2l+1$, the map
  $$f_l*\underbrace{1*\cdots*1}_{2n-1}\colon S^1*\underbrace{S^0*\cdots*S^0}_{2n-1}=S^{2n}\to S^1*\underbrace{S^0*\cdots*S^0}_{2n-1}=S^{2n}$$
  is a $C_2$-map of degree $2l+1$. Then we get $b_{2l+1}\in M_1$.

  Next, we show the uniqueness of $b_{2l+1}$. Let $b_{2l+1}'\in M_1$ be a map which lifts to a map $S^{2n}\to S^{2n}$ of degree $2l+1$. Clearly, this lift is a $C_2$-map. Then the lifts of $b_{2l+1}$ and $b_{2l+1}'$ are $C_2$-maps $S^{2n}\to S^{2n}$ of the same degree $2l+1$, and so by Lemma \ref{Hopf}, these lifts are $C_2$-equivariantly homotopic. Thus $b_{2l+1}$ and $b_{2l+1}'$ are homotopic, completing the proof.
\end{proof}

Now we are ready to determine $M_1$.

\begin{proposition}
  \label{H_1}
  $M_1=\{b_{2l+1}\mid l\in\Z\}$.
\end{proposition}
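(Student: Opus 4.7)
The plan is to establish the inclusion $M_1 \subseteq \{b_{2l+1} \mid l \in \Z\}$; the reverse inclusion is immediate from the construction in Lemma \ref{b_l}, since each $b_{2l+1}$ lifts to a $C_2$-equivariant self-map of $S^{2n}$ and hence has $\pi_1(b_{2l+1})$ equal to the identity of $C_2$. Given $f \in M_1$, I will produce a $C_2$-equivariant lift $\tilde f \colon S^{2n} \to S^{2n}$ of $f$, show that $\deg(\tilde f)$ is odd, and then identify $f$ with some $b_{2l+1}$ via the uniqueness part of Lemma \ref{b_l}.

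Since $\pi_1(S^{2n}) = 0$, covering space theory produces a lift $\tilde f$ of $f$. The hypothesis $\pi_1(f) = 1$ is exactly what makes $\tilde f$ $C_2$-equivariant: a path in $S^{2n}$ from a basepoint $x_0$ to $-x_0$ projects to a loop representing the nontrivial element of $\pi_1(\R P^{2n}) = C_2$, which $\pi_1(f)$ sends to itself, so $\tilde f(-x_0) = -\tilde f(x_0)$; equivariance at every point then follows from uniqueness of path lifts (equivalently, $a \circ \tilde f$ and $\tilde f \circ a$ are both lifts of $f \circ p$ that agree at $x_0$).

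For the parity of $\deg(\tilde f)$, I plan to compare the mod $2$ degrees of $f$ and $\tilde f$. By Lemma \ref{RP} (1), $H^*(\R P^{2n}; \Z/2) = \Z/2[w]/(w^{2n+1})$, and $\pi_1(f) = 1$ gives $f^*(w) = w$, hence $f^*(w^{2n}) = w^{2n}$; equivalently, the mod $2$ degree of $f$ is $1$. For a regular value $y \in S^{2n}$ of $\tilde f$, equivariance yields $\tilde f^{-1}(-y) = -\tilde f^{-1}(y)$; these two sets are disjoint, and their union equals $p^{-1}(f^{-1}(p(y)))$, which has $2|f^{-1}(p(y))|$ elements. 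Hence $|\tilde f^{-1}(y)| = |f^{-1}(p(y))|$ is odd, so $\deg(\tilde f) = 2l+1$ for some $l \in \Z$, and Lemma \ref{b_l} gives $f = b_{2l+1}$. The main obstacle is precisely this parity statement, which is essentially the classical Borsuk--Ulam fact that $C_2$-equivariant self-maps of $S^{2n}$ have odd degree.
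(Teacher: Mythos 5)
Your proof is correct, but it reaches the key point --- that the lift of $f$ has odd degree --- by a genuinely different route from the paper. The paper runs the mod $2$ analogue of Lemma \ref{deg}: it feeds the homotopy fibration $S^{2n}\to\R P^{2n}\to\R P^\infty$ into the Serre spectral sequence with $\Z/2$ coefficients, where the transgression $\tau\colon H^{2n}(S^{2n};\Z/2)\to H^{2n+1}(\R P^\infty;\Z/2)$ is an isomorphism, and naturality of $\tau$ forces $\deg(g)\equiv 1\bmod 2$ because $\pi_1(f)=1$ makes the induced self-map of $\R P^\infty$ the identity. You instead make the lift $C_2$-equivariant (a point the paper asserts only with ``clearly'' inside Lemma \ref{b_l}) and count preimages of a regular value, matching the mod $2$ degree of $\tilde f$ with that of $f$, which you read off from $f^*(w^{2n})=w^{2n}$; this is the classical ``odd maps have odd degree'' argument made self-contained. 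Your version is more elementary and avoids spectral sequences, but it carries one technical debt the paper's homotopy-theoretic argument does not: ``regular value'' presupposes smoothness, so you should first replace $f$ by a homotopic smooth map (and take the corresponding smooth lift), noting that this changes neither $\pi_1(f)$ nor any degree; with that sentence added, and with the observation that $p$ being a local diffeomorphism makes $y,-y$ regular values of $\tilde f$ whenever $p(y)$ is a regular value of $f$, your argument is complete. The remaining steps --- the reverse inclusion from Lemma \ref{b_l} and the identification $f=b_{2l+1}$ via the uniqueness in that lemma --- coincide with the paper's.
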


\begin{proof}
  The inclusion $M_1\supset\{b_{2l+1}\mid l\in\Z\}$ follows from Lemma \ref{b_l}. Let $f\in M_1$. Then $f$ lifts to a map $g\colon S^{2n}\to S^{2n}$. Since $\pi_1(f)=1$, $f^*=1$ on $H^1(\R P^n;\Z/2)$. Then there is a homotopy commutative diagram
  $$\xymatrix{S^{2n}\ar[r]^g\ar[d]_p&S^{2n}\ar[d]^p\\
  \R P^{2n}\ar[r]^f\ar[d]&\R P^{2n}\ar[d]\\
  \R P^\infty\ar@{=}[r]&\R P^\infty}$$
  in which columns are homotopy fibrations. Since the action of $\pi_1(\R P^\infty)$ on $H^*(S^{2n};\Z/2)$ is trivial and the transgression $\tau\colon H^{2n}(S^{2n};\Z/2)\to H^{2n+1}(\R P^\infty;\Z/2)$ is an isomorphism, we can apply a mod 2 cohomology analog of Lemma \ref{deg} to get that $g$ is of odd degree. Thus we obtain the inclusion $M_1^n\subset\{b_{2l+1}\mid l\in\Z\}$, completing the proof.
\end{proof}

Next, we determine the monoid structure of $\M(\R P^{2n})$.

\begin{lemma}
  \label{relation}
  In $\M(\R P^{2n})$,
  $$a_{2k}a_{2k'}=a_0,\quad a_{2k}b_{2l+1}=b_{2l+1}a_{2k}=a_{2k(2l+1)},\quad b_{2l+1}b_{2l'+1}=b_{(2l+1)(2l'+1)}.$$
\end{lemma}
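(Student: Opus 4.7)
The plan is to verify the three families of relations using the explicit defining composite \eqref{def a} for $a_{2k}$ together with the lift $\tilde{b}_{2l+1}\colon S^{2n}\to S^{2n}$ from Lemma \ref{b_l} (of degree $2l+1$, satisfying $b_{2l+1}\circ p=p\circ\tilde{b}_{2l+1}$). The central arithmetic input is that, since $2n$ is even, Lemma \ref{RP}(3) gives $\deg(q\circ p)=1+(-1)^{2n+1}=0$, so $q\circ p\colon S^{2n}\to S^{2n}$ is null-homotopic.

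Three of the four relations are then direct. For $a_{2k}\circ a_{2k'}$, the defining composite unfolds to $p\circ[k]\circ(q\circ p)\circ[k']\circ q$ (writing $[k]$ for a degree-$k$ self-map of $S^{2n}$), which contains the null map $q\circ p$ and is therefore itself null, matching $a_0=p\circ[0]\circ q$. For $b_{2l+1}\circ a_{2k}$, sliding $b_{2l+1}$ past $p$ via its lift yields $p\circ\tilde{b}_{2l+1}\circ[k]\circ q=p\circ[(2l+1)k]\circ q=a_{2k(2l+1)}$. For $b_{2l+1}\circ b_{2l'+1}$, the composite has lift $\tilde{b}_{2l+1}\circ\tilde{b}_{2l'+1}$ of degree $(2l+1)(2l'+1)$, so the uniqueness clause of Lemma \ref{b_l} identifies it with $b_{(2l+1)(2l'+1)}$.

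The main obstacle is the mixed relation $a_{2k}\circ b_{2l+1}=a_{2k(2l+1)}$, where $b_{2l+1}$ sits on the inside and cannot be replaced directly by its lift. My plan is to first prove that $q\circ b_{2l+1}\simeq q$ as maps $\R P^{2n}\to S^{2n}$. The analysis preceding Proposition \ref{H_0} establishes that $p_*\colon[\R P^{2n},S^{2n}]\to\M(\R P^{2n})$ is injective and that $q^*\colon\pi_{2n}(S^{2n})\to[\R P^{2n},S^{2n}]$ is surjective; combined with $M_0=\{a_0,a_2\}$, this forces $[\R P^{2n},S^{2n}]$ to consist of exactly two classes, namely $[q]$ and $0$. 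These classes are separated by the mod $2$ cohomology invariant $f\mapsto f^*(s)\in H^{2n}(\R P^{2n};\Z/2)$, using $q^*(s)=w^{2n}$ from Lemma \ref{RP}(2). Since $\pi_1(b_{2l+1})=1$ acts trivially on $H^1(\R P^{2n};\Z/2)$, we have $b_{2l+1}^*(w)=w$, hence $(q\circ b_{2l+1})^*(s)=b_{2l+1}^*(w^{2n})=w^{2n}\neq 0$, forcing $q\circ b_{2l+1}\simeq q$. Consequently $a_{2k}\circ b_{2l+1}=p\circ[k]\circ(q\circ b_{2l+1})\simeq p\circ[k]\circ q=a_{2k}$, and since $2l+1$ is odd we have $2k(2l+1)\equiv 2k\pmod{4}$, so Proposition \ref{H_0} identifies $a_{2k}$ with $a_{2k(2l+1)}$ in $M_0$, completing the proof.
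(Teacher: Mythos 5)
Your proposal is correct, and three of the four relations are verified exactly as in the paper: $a_{2k}a_{2k'}=a_0$ via $\deg(q\circ p)=0$ from Lemma \ref{RP} (3), $b_{2l+1}a_{2k}$ by sliding the lift $\tilde{b}_{2l+1}$ past $p$, and $b_{2l+1}b_{2l'+1}$ via the uniqueness clause of Lemma \ref{b_l}. Where you genuinely diverge is the mixed relation $a_{2k}b_{2l+1}$. The paper goes back to the join construction of $b_{2l+1}$ to produce a compatible restriction $b_{2l+1}\colon\R P^{2n-1}\to\R P^{2n-1}$, and then uses naturality of the cofibration of Lemma \ref{RP} (4) to get the identity $q\circ b_{2l+1}=(2l+1)\circ q$ on the nose, from which $a_{2k}b_{2l+1}=a_{2k(2l+1)}$ follows by degree arithmetic. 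You instead observe that the analysis preceding Proposition \ref{H_0} (injectivity of $p_*$ and surjectivity of $q^*$) together with $|M_0|=2$ forces $[\R P^{2n},S^{2n}]=\{0,[q]\}$, that these two classes are separated by the induced map on $H^{2n}(-;\Z/2)$, and that $\pi_1(b_{2l+1})=1$ forces $b_{2l+1}^*(w^{2n})=w^{2n}$, whence $q\circ b_{2l+1}\simeq q$ and $a_{2k}b_{2l+1}=a_{2k}=a_{2k(2l+1)}$ since $2k(2l+1)\equiv 2k\bmod 4$. Your route avoids having to know that $b_{2l+1}$ is compatible with the skeletal filtration (so it would work for any representative of the class $b_{2l+1}$, not just the join model), at the cost of leaning on the set-level computation of $M_0$ and a small cohomological detection argument; the paper's route is more self-contained at this step but is tied to the explicit construction of $b_{2l+1}$. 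There is no circularity in your argument, since Proposition \ref{H_0} and the exact-sequence analysis are established before this lemma.
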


\begin{proof}
  By definition, $a_{2k}a_{2k'}$ is the composite
  $$\R P^{2n}\xrightarrow{q}S^{2n}\xrightarrow{k'}S^{2n}\xrightarrow{p}\R P^{2n}\xrightarrow{q}S^{2n}\xrightarrow{k}S^{2n}\xrightarrow{p}\R P^{2n}.$$
  Then the first equality follows from Lemma \ref{RP} (3).

  There is a homotopy commutative diagram:
  $$\xymatrix{&&&S^{2n}\ar[r]^{2l+1}\ar[d]^p&S^{2n}\ar[d]^p\\
  \R P^{2n}\ar[r]^q&S^{2n}\ar[r]^k&S^{2n}\ar[r]^p\ar@{=}[ur]&\R P^{2n}\ar[r]^{b_{2l+1}}&\R P^{2n}}$$
  The composite of the bottom maps is $b_{2l+1}a_{2k}$ and the composite around the upper perimeter is $a_{2k(2l+1)}$. Then one gets $b_{2l+1}a_{2k}=a_{2k(2l+1)}$.

  As in the proof of Lemma \ref{b_l}, one can construct a map $b_{2l+1}\colon\R P^{2n-1}\to\R P^{2n-1}$ which lifts to a map $S^{2n-1}\to S^{2n-1}$ of degree $2l+1$ and is a restriction of $b_{2l+1}\colon\R P^{2n}\to\R P^{2n}$. Then by Lemma \ref{RP} (4), there is a homotopy commutative diagram:
  $$\xymatrix{S^{2n-1}\ar[r]^p\ar[d]^{2l+1}&\R P^{2n-1}\ar[d]^{b_{2l+1}}\ar[r]^j&\R P^{2n}\ar[d]^{b_{2l+1}}\ar[r]^q&S^{2n}\ar[d]^{2l+1}\\
  S^{2n-1}\ar[r]^p&\R P^{2n-1}\ar[r]^j&\R P^{2n}\ar[r]^q&S^{2n}}$$
  Thus $a_{2k}b_{2l+1}=p\circ k\circ q\circ b_{2l+1}=p\circ k\circ(2l+1)\circ q=a_{2k(2l+1)}$.

  Clearly, $b_{2l+1}b_{2l'+1}$ belongs to $M_1$ and lifts to a map $S^{2n}\to S^{2n}$ of degree $(2l+1)(2l'+1)$. Then the third equality holds.
\end{proof}

\begin{proof}
  [Proof of Theorem \ref{main even}]
  By Propositions \ref{H_0} and \ref{H_1} and Lemma \ref{relation}, one gets $\M(\R P^{2n})=\{a_{2k},b_{2l+1}\mid k=0,1\text{ and }l\in\Z\}$ such that for $i,j=0,2$,
  $$a_ia_j=a_0,\quad a_ib_{2l+1}=b_{2l+1}a_i=a_i,\quad b_{2l+1}b_{2l'+1}=b_{(2l+1)(2l'+1)}.$$
  Clearly, the map
  $$f\colon\M(\R P^{2n})\to M,\quad f(a_{2i})=2i\quad(i=0,1),\quad f(b_{2l+1})=2l+1\quad(l\in\Z)$$
  is well defined. Furthermore, it is bijective and a monoid homomorphism. Thus the proof is complete.
\end{proof}


\end{document}